\def\N{\mathbb{N}}
\def\C{\mathbb{C}}
\def\No{\mathbb{N}_0}
\def\ri{\mathrm{i}}
\newcommand\abs[1]{\lvert#1\rvert}
\newcommand\norm[1]{\lVert #1\rVert}
\def\dm{\mathrm d}
\def\emptynorm{\lVert\cdot\rVert}
\def\emptyabs{\lvert\cdot\rvert}
\def\adj{^*}
\def\pin{^-}
\def\pfi{^+}
\newcommand{\pl}[1]{\Lambda(#1)}
\newcommand\fdiff[1]{^{[#1]}}
\newcommand\diff[1]{^{(#1)}}
\newcommand\pgen[1]{\mathfrak{p}(#1)}
\newcommand\mgen[1]{\mathfrak{m}(#1)}
\def\F{\mathcal{F}}
\def\G{\mathcal{G}}
\def\pln{^{\rm no}}
\def\plp{^{\rm pl}}
\newcommand\od[2][{1}]{D^{\ifx\relax#1\relax\else(#1)\fi}(#2)}
\newcommand\fd[3][{\F}]{D_{#1}^{(#2)}(#3)}
\newcommand\ddf[2][{\F}]{D_{#1}(#2)}
\newtheorem{theorem}{Theorem}[section]
\newtheorem{proposition}[theorem]{Proposition}
\newtheorem{corollary}[theorem]{Corollary}
\newtheorem{lemma}[theorem]{Lemma}
\newtheorem{question}[theorem]{Question}
\theoremstyle{definition}
\newtheorem{definition}[theorem]{Definition}
\theoremstyle{remark}
\newtheorem{example}[theorem]{Example}
\begin{document}
\title[The chain rule for $\F$-differentiation]{The chain rule for $\F$-differentiation}

\author{T. Chaobankoh}
\address{Department of Mathematics, Faculty of Science, Chiang Mai University, Chiang Mai, 50200, Thailand}
\email{tanadon.c@cmu.ac.th}
\thanks{The first author was supported by The Higher Educational Strategic Scholarships for Frontier Research Network, Thailand}

\author{J. F. Feinstein}
\address{School of Mathematical Sciences, The University of Nottingham, University Park, Nottingham, NG7 2RD, UK}
\email{joel.feinstein@nottingham.ac.uk}

\author{S. Morley}
\address{School of Mathematical Sciences, The University of Nottingham, University Park, Nottingham, NG7 2RD, UK}
\email{pmxsm9@nottingham.ac.uk}
\thanks{The third author was supported by a grant from the EPSRC}
\thanks{This paper contains work from the PhD theses of the first and third authors}

\subjclass[2010]{Primary 46J10, 46J15; Secondary 46E25}
\keywords{Differentiable functions, Banach function algebra, completions, Normed algebra}
\begin{abstract}
Let $X$ be a perfect, compact subset of the complex plane, and let $D^{(1)}(X)$ denote the (complex) algebra of continuously complex-differentiable functions on $X$. Then $D^{(1)}(X)$ is a normed algebra of functions but, in some cases, fails to be a Banach function algebra. Bland and the second author (\cite{bland2005}) investigated the completion of the algebra $D^{(1)}(X)$, for certain sets $X$ and collections $\mathcal{F}$ of paths in $X$, by considering $\mathcal{F}$-differentiable functions on $X$.

In this paper, we investigate composition, the chain rule, and the quotient rule for this notion of differentiability. We give an example where the chain rule fails, and give a number of sufficient conditions for the chain rule to hold. Where the chain rule holds, we observe that the Fa\'a di Bruno formula for higher derivatives is valid, and this allows us to give some results on homomorphisms between certain algebras of $\mathcal{F}$-differentiable functions.
\end{abstract}
\maketitle

Throughout this paper, we use the term {\em compact plane set} to mean a non-empty, compact subset of the complex plane, $\C$. We denote the set of all positive integers by $\N$ and the set of all non-negative integers by $\No$. Let $X$ be a compact Hausdorff space. We denote the algebra of all continuous, complex-valued functions on $X$ by $C(X)$ and we give $C(X)$ the uniform norm $\emptyabs_X$, defined by
\[
\abs{f}_X=\sup_{x\in X}\abs{f(x)}\qquad(f\in C(X)).
\]
This makes $C(X)$ into a commutative, unital Banach algebra. A subset $S$ of $C(X)$ {\em separates the points of $X$} if, for each $x,y\in X$ with $x\neq y$, there exists $f\in S$ such that $f(x)\neq f(y)$. A {\em normed function algebra} on $X$ a normed algebra $(A,\emptynorm)$ such that $A$ is a subalgebra of $C(X)$, $A$ contains all constant functions and separates the points of $X$, and, for each $f\in A$, $\norm{f}\geq\abs{f}_X$. A {\em Banach function algebra} on $X$ is a normed function algebra on $X$ which is complete. We say that such a Banach function algebra $A$ is {\em natural} (on $X$) if every character on $A$ is given by evaluation at some point of $X$. We refer the reader to \cite{dales2000} (especially Chapter~4) for further information on Banach algebras and Banach function algebras.

Let $\od[1]X$ denote the normed algebra of all continuously (complex) differentiable, complex-valued functions on $X$, as discussed in \cite{dales1973} and \cite{dalefein2010}. Furthermore, let $\od[n]X$ denote the normed algebra of all continuously $n$-times (complex) differentiable, complex-valued functions on $X$, and let $\od[\infty]X$ denote the algebra of continuous functions which have continuous (complex) derivatives of all orders. Dales and Davie (\cite{dales1973}) also introduced the algebras
\[
\od[]{X,M}:=\left\{f\in\od[\infty]{X}:\sum_{j=0}^\infty\frac{\abs{f\diff j}_X}{M_j}<\infty\right\},
\]
where $M=(M_n)_{n=0}^\infty$ is a suitable sequence of positive real numbers. The algebras $\od[]{X,M}$ are called {\em Dales-Davie} algebras.

The usual norms on the algebras $\od[n]{X}$ ($n\in\N$) and $\od[]{X,M}$ above need not be complete, so we often investigate the completion of these algebras. One approach to this was introduced by Bland and Feinstein \cite{bland2005}, where they discussed algebras of $\F$-differentiable functions (see Section \ref{sec Bland-Feinstein algebras}), and these algebras were investigated further in \cite{dalefein2010} and \cite{hoffmann2011}.

Kamowitz and Feinstein investigated the conditions under which composition with an infinitely differentiable map induces an endomorphism (\cite{kamowitz1998,feinstein2000a,feinstein2004a}) or a homomorphism (\cite{feinstein2004k}) between Dales-Davie algebras.

In this paper, we investigate composition, the chain rule, and the quotient rule for $\F$-differentiation. We give an example where the chain rule for $\F$-differentiation fails, and give a number of sufficient conditions for the chain rule to hold. We also prove a version of the quotient rule for $\F$-differentiable functions.

Where the chain rule holds, we observe that the Fa\'a di Bruno formula for higher derivatives is valid, and this allows us to give some sufficient conditions, similar to those in \cite{feinstein2004k}, for composition with an infinitely $\F$-differentiable function to induce a homomorphism between the $\F$-differentiability versions of Dales-Davie algebras.

\section{Paths in the complex plane}
\label{paths section}
We begin with a discussion of collections of paths in the complex plane.

\begin{definition}
A {\em path} in $\C$ is a continuous function $\gamma:[a,b]\to\C,$ where $a<b$ are real numbers. Let $\gamma:[a,b]\to\C$ be a path. The {\em parameter interval of $\gamma$} is the interval $[a,b]$. The {\em endpoints of $\gamma$} are the points $\gamma(a)$ and $\gamma(b),$ which we denote by $\gamma\pin$ and $\gamma\pfi,$ respectively. We denote by $\gamma\adj$ the {\em image} $\gamma([a,b])$ of $\gamma$. A {\em subpath} of $\gamma$ is a path obtained by restricting $\gamma$ to a non-degenerate$,$ closed subinterval of $[a,b]$. If $X$ is a subset of $\C$ then we say that {\em $\gamma$ is a path in $X$} if $\gamma\adj\subseteq X$.
\end{definition}

Let $\gamma$ be a path in $\C$. We say that $\gamma$ is a {\em Jordan path} if $\gamma$ is an injective function.

Let $[a,b]$ be a non-degenerate closed interval. A {\em partition} $\mathcal P$ of $[a,b]$ is a finite set $\{x_0,\dotsc,x_n\}\subseteq[a,b]$ such that $x_0=a$, $x_n=b$ and $x_j<x_{j+1}$ for each $j\in\{0,1,\dotsc,n-1\}$. If $\mathcal P$ and $\mathcal P'$ are partitions of $[a,b]$ then we say that $\mathcal P'$ is {\em finer} than $\mathcal P$ if $\mathcal P\subseteq\mathcal P'$.

\begin{definition}
Let $\gamma:[a,b]\to\C$ be a path and let $c,d\in[a,b]$ with $c<d$. The {\em total variation} of $\gamma$ over $[c,d]$ is
\[
V_c^d(\gamma):=\sup\left\{\sum_{j=0}^{n-1}\abs{\gamma(x_{j+1})-\gamma(x_j)}:\mathcal P=\{x_0,\dotsc,x_n\}\right\}
\]
where the supremum is taken over all partitions $\mathcal P$ of $[c,d]$. We say that $\gamma$ is {\em rectifiable} if $V_a^b(\gamma)<\infty,$ in which case we write $\pl\gamma:=V_a^b(\gamma);$ otherwise it is {\em non-rectifiable}. The {\em length} of a rectifiable path $\gamma$ is $\pl\gamma$.
\end{definition}

For a detailed discussion of paths, total variation and path length, see \cite[Chapter~6]{apostol1974}.

We say that a path $\gamma$ is {\em admissible} if $\gamma$ is rectifiable and contains no constant subpaths. Let $\gamma:[a,b]\to\C$ be a non-constant (but not necessarily admissible) rectifiable path. We define the {\em path length parametrisation} $\gamma\plp:[0,\pl\gamma]\to\C$ of $\gamma$ to be the unique path satisfying $\gamma\plp(V_a^t(\gamma))=\gamma(t)$ ($t\in [a,b]$); see, for example, \cite[pp.~109-110]{federer1969} for details. We define the {\em normalised path length parametrisation} $\gamma\pln:[0,1]\to\C$ of $\gamma$ to be the path such that $\gamma\pln(t)=\gamma\plp(t\pl\gamma)$ for each $t\in[0,1]$. It is clear that $\gamma\plp$ and $\gamma\pln$ are necessarily admissible paths and $(\gamma\plp)\adj=(\gamma\pln)\adj=\gamma\adj$. It is not hard to show, using \cite[Theorem~2.4.18]{federer1969}, that
\[
\int_\gamma f(z)\;\dm z=\int_{\gamma\plp}f(z)\;\dm z=\int_{\gamma\pln} f(z)\;\dm z,
\]
for all $f\in C(\gamma\adj)$. We shall use this fact implicitly throughout.

\begin{definition}
Let $X$ be a compact plane set and let $\F$ be a collection of paths in $X$. We define $\F\adj:=\{\gamma\adj:\gamma\in\F\}$. We say that $\F$ is {\em effective} if $\overline{\bigcup\F\adj}=X,$ each path in $\F$ is admissible$,$ and every subpath of a path in $\F$ belongs to $\F$. We denote by $\F\pln$ the collection $\{\gamma\pln:\gamma\in\F\}$.
\end{definition}

Let $X$ be a compact plane set and let $\F$ be a collection of paths in $X$. It is clear that $\F\adj=(\F\pln)\adj$.

We introduce the following definitions from \cite{bland2005,dales1973} and \cite{dalefein2010}.

\begin{definition}
Let $X$ be a compact plane set. We say that $X$ is {\em uniformly regular} if there exists a constant $C>0$ such that$,$ for all $x,y\in X,$ there exists a rectifiable path $\gamma$ in $X$ with $\gamma\pin=x$ and $\gamma\pfi=y$ such that $\pl\gamma\leq C\abs{x-y}$. We say that $X$ is {\em pointwise regular}$,$ if for each $x\in X,$ there exists a constant $C_x>0$ such that$,$ for each $y\in X,$ there exists a path $\gamma$ in $X$ with $\gamma\pin=x$ and $\gamma\pfi=y$ such that $\pl\gamma\leq C_x\abs{x-y}$.
We say that $X$ is {\em semi-rectifiable} if the union of the images of all rectifiable, Jordan paths in $X$ is dense in $X$.
\end{definition}

We also require the following definition from \cite{dalefein2010}.

\begin{definition}
Let $X$ be a compact plane set and let $\F$ be an effective collection of paths in $X$. We say that $X$ is {\em $\F$-regular at $x\in X$} if there exists a constant $C_x>0$ such that for each $y\in X$ there exists $\gamma\in\F$ with $\gamma\pin=x,$ $\gamma\pfi=y$ and $\pl\gamma\leq C_x\abs{x-y}$. We say that $X$ is {\em $\F$-regular} if $X$ is $\F$-regular at each point $x\in X$.
\end{definition}

We note that if $X$ is a compact plane set which is $\F$-regular, for some effective collection $\F$ of paths in $X$, then $X$ is pointwise regular.

\section{Algebras of $\F$-differentiable functions}
\label{sec Bland-Feinstein algebras}
In this section we discuss algebras of $\F$-differentiable functions as investigated in \cite{bland2005} and \cite{dalefein2010}, along with algebras of $\F$-differentiable functions analogous to the Dales-Davie algebras introduced in \cite{dales1973}.

\begin{definition}
Let $X$ be a perfect compact plane set$,$ let $\F$ be a collection of rectifiable paths in $X,$ and let $f\in C(X)$. A function $g\in C(X)$ is an {\em $\F$-derivative of $f$} if$,$ for each $\gamma\in\F,$ we have
\[
\int_{\gamma}g(z)\;\dm z=f(\gamma\pfi)-f(\gamma\pin).
\]
If $f$ has an $\F$-derivative on $X$ then we say that $f$ is $\F$-differentiable on $X$.
\end{definition}

The following proposition summarises several properties of $\F$-derivatives and $\F$-differentiable functions on certain compact plane sets. Details can be found in \cite{bland2005} and \cite{dalefein2010}.

\begin{proposition}\label{F derive old properties}
Let $X$ be a semi-rectifiable compact plane set and let $\F$ be an effective collection of paths in $X$.
\begin{enumerate}
  \item Let $f,g,h\in C(X)$ be such that $g$ and $h$ are $\F$-derivatives for $f$. Then $g=h$.
  \item Let $f\in\od X$. Then the usual complex derivative of $f$ on $X,$ $f',$ is an $\F$-derivative for $f$.
  \item Let $f_1,f_2,g_1,g_2\in C(X)$ be such that $g_1$ is an $\F$-derivative for $f_1$ and $g_2$ is an $\F$-derivative for $f_2$. Then $f_1g_2+g_1f_2$ is an $\F$-derivative for $f_1f_2$.
  \item Let $f_1,f_2,g_1,g_2\in C(X)$ and $\alpha,\beta\in\C$ be such that $g_1$ is an $\F$-derivative for $f_1$ and $g_2$ is an $\F$-derivative for $f_2$. Then $\alpha g_1+\beta g_2$ is an $\F$-derivative for $\alpha f_1+\beta f_2$.
\end{enumerate}
\end{proposition}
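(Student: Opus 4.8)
The plan is to reduce every statement to a one-dimensional problem along a single path by passing to the path-length parametrisation. I would fix $\gamma\in\F$ and write $L=\pl\gamma$; by the remarks preceding the proposition, I may integrate over $\gamma\plp:[0,L]\to\C$ in place of $\gamma$, and $\gamma\plp$ is Lipschitz with $\abs{(\gamma\plp)'(s)}=1$ for almost every $s$. The crucial structural input is effectiveness: since every subpath of $\gamma$ again lies in $\F$, each restriction $\gamma\plp|_{[0,s]}$ is itself a path in $\F$, so any $\F$-derivative identity holds not only for the whole of $\gamma$ but simultaneously for all of its initial segments. This is exactly what will let me upgrade an integral identity to a pointwise one after differentiating in $s$.

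Part (d) is immediate from the linearity of the path integral and needs no structural hypotheses on $X$ or $\F$. Part (b) is the fundamental theorem of calculus for contour integrals: for $f\in\od X$ the derivative $f'$ is continuous, and $\int_\gamma f'(z)\,\dm z=f(\gamma\pfi)-f(\gamma\pin)$ for every rectifiable $\gamma$, which is precisely the defining property of an $\F$-derivative.

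For part (a) I would set $k=g-h\in C(X)$, so that $\int_\gamma k(z)\,\dm z=0$ for every $\gamma\in\F$. By effectiveness the same vanishing holds for every initial segment, so the absolutely continuous function $s\mapsto\int_0^s k(\gamma\plp(\tau))(\gamma\plp)'(\tau)\,\dm\tau$ is identically zero on $[0,L]$. Differentiating gives $k(\gamma\plp(s))(\gamma\plp)'(s)=0$ almost everywhere, and since $\abs{(\gamma\plp)'}=1$ a.e.\ I deduce $k\circ\gamma\plp=0$ a.e., hence everywhere by continuity. Thus $k$ vanishes on $\gamma\adj$ for every $\gamma\in\F$, so $k=0$ on $\bigcup\F\adj$; as this set is dense in $X$ by effectiveness and $k$ is continuous, $k=0$.

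Part (c) is where I expect the real work, because I cannot differentiate $f_1,f_2$ in any classical sense and control them only through the increments recorded by $g_1,g_2$. The key point is that, by effectiveness, the $\F$-derivative identity applied to all initial segments shows directly that $F_i(s):=f_i(\gamma\plp(s))$ is absolutely continuous on $[0,L]$ with $F_i'(s)=g_i(\gamma\plp(s))(\gamma\plp)'(s)$ a.e. Since $F_1,F_2$ are bounded (they are continuous on the compact set $\gamma\adj$), their product is again absolutely continuous and obeys the classical Leibniz rule almost everywhere, giving
\[
(F_1F_2)'(s)=\big(f_1g_2+g_1f_2\big)(\gamma\plp(s))\,(\gamma\plp)'(s)\qquad\text{a.e.}
\]
Integrating over $[0,L]$ and invoking the fundamental theorem of calculus for absolutely continuous functions yields $\int_\gamma(f_1g_2+g_1f_2)(z)\,\dm z=(f_1f_2)(\gamma\pfi)-(f_1f_2)(\gamma\pin)$, as required. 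The only genuine obstacle is thus the passage from the $\F$-derivative hypothesis to the absolute continuity of $F_i$ with the stated derivative; once that is secured, the product rule for absolutely continuous functions does the rest.
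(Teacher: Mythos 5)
The paper itself offers no proof of this proposition: it is stated as a summary of results from \cite{bland2005} and \cite{dalefein2010}, so your argument has to be measured against the proofs in those sources rather than against anything in this text. Your overall strategy --- fix one admissible $\gamma\in\F$, pass to the arc-length parametrisation $\gamma\plp$, use that $\gamma\plp$ is $1$-Lipschitz with $\abs{(\gamma\plp)'}=1$ a.e.\ and that the contour integral becomes the Lebesgue integral $\int_0^L h(\gamma\plp(\tau))(\gamma\plp)'(\tau)\,\dm\tau$ (this is exactly the Federer fact the paper invokes), and exploit effectiveness to obtain the defining identity on every initial segment --- is sound, and your treatments of (a), (c) and (d) are correct. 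In particular, the observation driving (c), namely that the $\F$-derivative hypothesis forces $F_i=f_i\circ\gamma\plp$ to be Lipschitz with $F_i'=(g_i\circ\gamma\plp)(\gamma\plp)'$ a.e., after which the Leibniz rule for absolutely continuous functions finishes the job, is a clean real-variable route; the arguments in the cited papers instead work directly with Riemann--Stieltjes sums over fine partitions and a telescoping identity for the product. The two cost about the same, but yours has the merit of deriving (a) and (c) from a single differentiation lemma.

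The one step you should not wave through is (b). For $f\in\od X$ the function is differentiable only relative to the compact set $X$, and $\gamma$ is merely rectifiable, so ``the fundamental theorem of calculus for contour integrals'' is not available off the shelf: the textbook statement assumes either a piecewise-smooth path or a primitive holomorphic on a neighbourhood of $\gamma\adj$, and neither holds here. Moreover, one cannot simply invoke uniform differentiability of $f$ on $X$, since for general perfect compact plane sets membership of $\od X$ does not guarantee it. The standard repair is a gauge (tagged-partition) argument: for $\varepsilon>0$ and each $t\in[a,b]$ choose $\delta_t>0$ with $\abs{f(w)-f(\gamma(t))-f'(\gamma(t))(w-\gamma(t))}\leq\varepsilon\abs{w-\gamma(t)}$ for $w\in X$ near $\gamma(t)$, take a tagged partition of $[a,b]$ subordinate to the corresponding gauge (Cousin's lemma), and compare the telescoping sum $\sum_j\bigl(f(\gamma(t_{j+1}))-f(\gamma(t_j))\bigr)$ with the Riemann--Stieltjes sum $\sum_j f'(\gamma(\tau_j))\bigl(\gamma(t_{j+1})-\gamma(t_j)\bigr)$; the error is at most $2\varepsilon\pl\gamma$. (The same estimate shows $f\circ\gamma\plp$ is Lipschitz, after which your machinery from (a) and (c) also finishes it.) As written, (b) is asserted rather than proved, and it is the one clause of the proposition that genuinely concerns classical differentiability.
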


Let $X$ be a semi-rectifiable compact plane set, and let $\F$ be an effective collection of paths in $X$. In this setting we write $f\fdiff 1$ for the unique $\F$-derivative of an $\F$-differentiable function and we will often write $f\fdiff 0$ for $f$. We write $\fd1X$ for the algebra of all $\F$-differentiable functions on $X$. We note that, with the norm $\norm{f}_{\F,1}:=\abs{f}_X+\abs{f\fdiff 1}_{X}$ ($f\in\fd1X$), the algebra $\fd1X$ is a Banach function algebra on $X$ (\cite[Theorem~5.6]{dalefein2010}).

For each $n\in\N$, we define (inductively) the algebra
\[
\fd nX:=\{f\in\fd{1}X:f\fdiff 1\in\fd{n-1}X\},
\]
and, for each $f\in\fd nX,$ we write $f\fdiff n$ for the $n$th $\F$-derivative of $f$. We note that, for each $n\in\N$, $\fd nX$ is a Banach function algebra on $X$ (see \cite{bland2005}) when given the norm
\[
\norm{f}_{\F,n}:=\sum_{k=0}^n\frac{\abs{f\fdiff k}_X}{k!}\qquad(f\in\fd nX).
\]
In addition, we define the algebra $\fd\infty X$ of all functions which have $\F$-derivatives of all orders; that is, $\fd\infty X=\bigcap_{n=1}^\infty\fd nX$. It is easy to see that, for each $n\in\N$, we have $\od[n]X\subseteq\fd nX$ and $\od[\infty]X\subseteq\fd\infty X$.

\section{Maximal collections and compatibility}
\label{maximal compatibility section}
We aim to prove a chain rule for $\F$-differentiable functions, but first we must investigate collections of paths further. Throughout this section, let $X$ be a semi-rectifiable, compact plane set, and let $\mathcal A$ be the collection of all admissible paths in $X$. In this section, we identify $\fd 1X$ with the subset $S_{\F}$ of $C(X)\times C(X)$ consisting of all pairs $(f,g)$ where $f\in \fd 1X$ and $g$ is the $\F$-derivative of $f$. We begin with a definition.

\begin{definition}
Let $\gamma$ be an admissible path in $X$ and let $f,g\in C(X)$. We say that $g$ is the {\em $\gamma$-derivative} of $f$ if$,$ for each subpath $\sigma$ of $\gamma$, we have
\[
\int_\sigma g(z)\;\dm z=f(\sigma\pfi)-f(\sigma\pin).
\]
\end{definition}

Note that, in the above, if $\G$ denotes the collection of all subpaths of $\gamma$, then $\G$ is effective in $\gamma\adj$, so $\G$-derivatives on $\gamma\adj$ are unique. Thus, if $f$ has a $\gamma$ derivative $g$ on $X$, then $g|_{\gamma\adj}$ is uniquely determined.

\begin{definition}
Let $S\subseteq C(X)\times C(X)$. We define
\[
\pgen{S}:=\{\gamma\in\mathcal A:\text{for all $(f,g)\in S,$ $g$ is the $\gamma$-derivative of $f$}\}.
\]
Let $\F$ be an effective collection of paths in $X$. Then we write $\mgen{\F}$ for $\pgen{S_{\F}},$ where $S_{\F}=\{(f,f\fdiff 1):f\in\fd 1X\}$ as above.
\end{definition}

The following lemma follows quickly from the definition of $\mgen{\F}$. We omit the details.

\begin{lemma}\label{maximal collection same algebra}
Let $S,T\subseteq C(X)\times C(X)$. If $S\subseteq T$ then $\pgen T\subseteq\pgen S$. Let $\F,\G$ be effective collections of paths in $X$. Then $\fd[\mgen\F]1X=\fd1X$. Moreover$,$ $\fd1X\subseteq\fd[\G]1X$ if and only if $\mgen\G\subseteq\mgen\F$.
\end{lemma}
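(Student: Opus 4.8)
The plan is to prove the three assertions in sequence, as they build on one another. For the first claim, that $S\subseteq T$ implies $\pgen T\subseteq\pgen S$, I would simply unwind the definition of $\pgen{\cdot}$: if $\gamma\in\pgen T$, then $g$ is the $\gamma$-derivative of $f$ for every pair $(f,g)\in T$, and hence in particular for every pair in the smaller set $S$, so $\gamma\in\pgen S$. This is pure logic and requires no calculation.

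For the second claim, $\fd[\mgen\F]1X=\fd1X$, I would argue by mutual containment of the associated pair-sets $S_{\F}$ and $S_{\mgen\F}$. By the very definition of $\mgen\F=\pgen{S_\F}$, every path $\gamma\in\mgen\F$ has the property that each $f\in\fd1X$ has $\gamma$-derivative $f\fdiff1$; restricting to the full (rather than sub-) path shows that $f\fdiff1$ is an $\mgen\F$-derivative of $f$, so $S_\F\subseteq S_{\mgen\F}$, giving $\fd1X\subseteq\fd[\mgen\F]1X$. For the reverse inclusion I would use that $\F$ is effective and that $\mgen\F$ contains $\F$ itself: indeed, each $\gamma\in\F$ (and each subpath, since $\F$ is closed under subpaths) is an admissible path for which $f\fdiff1$ is the $\gamma$-derivative of every $f\in\fd1X$, so $\F\subseteq\mgen\F$. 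Then any $\mgen\F$-differentiable function is in particular $\F$-differentiable, whence $\fd[\mgen\F]1X\subseteq\fd1X$, and the two derivatives agree by uniqueness (Proposition~\ref{F derive old properties}(a), applicable since $\mgen\F$ is effective).

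For the final equivalence, I would treat the two directions separately. Suppose first that $\mgen\G\subseteq\mgen\F$. Applying the first claim (with $S=S_\G$, $T=S_\F$ once we know $S_\G\subseteq S_\F$)—or more directly, using the second claim twice together with the containment $\mgen\G\subseteq\mgen\F$ and monotonicity of the algebra in its path-collection—I would deduce $\fd1X=\fd[\mgen\F]1X\subseteq\fd[\mgen\G]1X=\fd[\G]1X$. Conversely, suppose $\fd1X\subseteq\fd[\G]1X$. I claim this forces $S_\G\subseteq S_\F$: if $f\in\fd1X$ then $f\in\fd[\G]1X$, and the $\F$- and $\G$-derivatives must coincide (again by uniqueness of derivatives on the effective collection $\G$, whose images are dense), so $(f,f\fdiff1)\in S_\G$. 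Applying the first claim to $S_\G\subseteq S_\F$ yields $\pgen{S_\F}\subseteq\pgen{S_\G}$, i.e.\ $\mgen\G\subseteq\mgen\F$.

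The main obstacle is the reverse inclusion in the second claim, namely verifying $\F\subseteq\mgen\F$, since this is where the hypotheses that $\F$ is effective and closed under subpaths are genuinely used; the rest is bookkeeping with the antitone map $S\mapsto\pgen S$ and the uniqueness of $\F$-derivatives guaranteed by semi-rectifiability. I would expect the author to dispatch all of this quickly, which is consistent with the remark that the lemma ``follows quickly from the definition of $\mgen\F$.''
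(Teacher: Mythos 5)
Your arguments for the first claim, for $\fd[\mgen\F]1X=\fd1X$, and for the implication $\mgen\G\subseteq\mgen\F\Rightarrow\fd1X\subseteq\fd[\G]1X$ are correct, and since the paper omits the proof entirely there is nothing to compare against; these parts are essentially the only available argument (antitonicity of $\pgen{\cdot}$, the inclusion $\F\subseteq\mgen\F$ via closure of $\F$ under subpaths, and monotonicity of $\fd[\cdot]1X$ in the path collection).

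The forward implication of the ``moreover'' is where you have a genuine gap. You assert that if $f\in\fd1X\subseteq\fd[\G]1X$ then the $\F$-derivative and the $\G$-derivative of $f$ coincide, ``by uniqueness of derivatives on the effective collection $\G$.'' Proposition~\ref{F derive old properties}(a) only says that two \emph{$\G$-derivatives} of the same function agree; it says nothing about an $\F$-derivative agreeing with a $\G$-derivative, and in general they need not. For instance, on the square of Example~\ref{incompatibility example}, the function $f(z)=\mathrm{Im}(z)$ has $\F$-derivative $0$ (for the horizontal segments) but $\G$-derivative the constant $-\ri$ (for the vertical segments), even though both collections are effective with $\bigcup\F\adj=\bigcup\G\adj=X$. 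To apply uniqueness you would first have to show that $f\fdiff1$ is itself a $\G$-derivative of $f$, which is not given. Moreover, this is not a removable detail: $\gamma$-derivatives are unique on $\gamma\adj$, so for $\gamma\in\mgen\G$ the $\F$-derivative can be the $\gamma$-derivative of $f$ only if it agrees with the $\G$-derivative on $\gamma\adj$, and hence (by density of $\bigcup\G\adj$) on all of $X$. Thus the implication $\fd1X\subseteq\fd[\G]1X\Rightarrow\mgen\G\subseteq\mgen\F$ is \emph{equivalent} to the coincidence of the two derivatives on $\fd1X$, so the step you wave through is the entire content of this direction; it needs a real argument (or, conceivably, the hypothesis should be read as the inclusion $S_\F\subseteq S_\G$ of derivative pairs rather than the set inclusion of the algebras). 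There is also a small bookkeeping slip: what your argument would establish is $S_\F\subseteq S_\G$, not $S_\G\subseteq S_\F$, and it is $S_\F\subseteq S_\G$ that yields $\pgen{S_\G}\subseteq\pgen{S_\F}$, i.e.\ $\mgen\G\subseteq\mgen\F$; as written your inclusions point the wrong way even though you land on the intended conclusion.
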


We now investigate some operations on $\mgen{\F}$. Let $\gamma_1:[a,b]\to\C$ and $\gamma_2:[c,d]\to\C$ be paths such that $\gamma_1\pfi=\gamma_2\pin$. We write $\gamma_1\dotplus\gamma_2$ for the path given by
\[
(\gamma_1\dotplus\gamma_2)(t)=
\begin{cases}
\gamma_1(a+2t(b-a)),& t\in[0,1/2),\\
\gamma_2(c+(2t-1)(d-c)),& t\in[1/2,1].
\end{cases}
\]
We call the path $\gamma_1\dotplus\gamma_2$ the {\em join} of $\gamma_1$ and $\gamma_2$.
Note that, if $\gamma_1$ and $\gamma_2$ are admissible, then $\gamma_1\dotplus\gamma_2$ is admissible. The {\em reverse of $\gamma_1$}, denoted by $-\gamma_1$, is given by $-\gamma_1(t)=\gamma_1(b-t(b-a))$ ($t\in[0,1]$). Our notation for joining and reversing is not entirely standard and there are many ways to parametrise these paths.

\begin{lemma}
Let $\F$ be an effective collection of paths in $X$. Then $\mgen{\F}$ has the following properties$:$
\begin{enumerate}
 \item if $\gamma\in\mgen{\F}$ then $-\gamma\in\mgen{\F};$
 \item if $\gamma\in\mgen{\F}$ then $\gamma\plp,\gamma\pln\in\mgen\F;$
 \item if $\gamma_1,\gamma_2\in\mgen{\F}$ such that $\gamma_1\pfi=\gamma_2\pin$ then $\gamma_1\dotplus\gamma_2\in\mgen{\F}$.
\end{enumerate}
\end{lemma}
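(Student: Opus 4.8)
The plan is to fix an arbitrary pair $(f,f\fdiff1)\in S_{\F}$ and, in each of the three cases, verify directly from the definition that $f\fdiff1$ is the $\sigma$-derivative of $f$ for the newly constructed path; that is, that $\int_\sigma f\fdiff1(z)\,\dm z=f(\sigma\pfi)-f(\sigma\pin)$ holds on every subpath $\sigma$ of that path. Admissibility of $-\gamma$, of $\gamma\plp$ and $\gamma\pln$, and of $\gamma_1\dotplus\gamma_2$ has already been recorded in the text, so membership in $\mathcal A$ is automatic and only the integral identity needs checking. Throughout I work with paths up to reparametrisation, and the three verifications rest on three elementary properties of the contour integral: invariance under reparametrisation, sign change under reversal, and additivity over concatenation.

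For (a), every subpath of $-\gamma$ is, up to the reparametrisation built into our definition of the reverse, the reverse of a subpath $\tau$ of $\gamma$; reversing a path negates its integral and interchanges its endpoints, so $\int_{-\tau}f\fdiff1=-\int_\tau f\fdiff1=-(f(\tau\pfi)-f(\tau\pin))=f(\tau\pin)-f(\tau\pfi)$, which is exactly $f((-\tau)\pfi)-f((-\tau)\pin)$. Since $\gamma\in\mgen\F$ supplies the identity on $\tau$, the identity holds on the corresponding subpath of $-\gamma$, giving $-\gamma\in\mgen\F$.

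For (c), set $p:=\gamma_1\pfi=\gamma_2\pin$ and classify a subpath $\sigma$ of $\gamma_1\dotplus\gamma_2$ according to the join point $t=1/2$. If $\sigma$ lies in the first (resp.\ second) half it is a subpath of $\gamma_1$ (resp.\ $\gamma_2$) and the identity is immediate from $\gamma_1\in\mgen\F$ (resp.\ $\gamma_2\in\mgen\F$). If $\sigma$ straddles the join, then $\sigma=\sigma_1\dotplus\sigma_2$ where $\sigma_1$ is a subpath of $\gamma_1$ ending at $p$ and $\sigma_2$ is a subpath of $\gamma_2$ starting at $p$; additivity of the integral then gives $\int_\sigma f\fdiff1=\int_{\sigma_1}f\fdiff1+\int_{\sigma_2}f\fdiff1=(f(p)-f(\sigma_1\pin))+(f(\sigma_2\pfi)-f(p))$, and the value of $f$ at $p$ telescopes away to leave $f(\sigma\pfi)-f(\sigma\pin)$.

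The main work is in (b). Because $\gamma$ is admissible it has no constant subpaths, so its total variation function $t\mapsto V_a^t(\gamma)$ is a continuous, strictly increasing bijection of $[a,b]$ onto $[0,\pl\gamma]$; this is precisely what makes $\gamma\plp$ a genuine reparametrisation of $\gamma$ and puts the subpaths of $\gamma\plp$ in bijection with those of $\gamma$, each corresponding pair having the same image, orientation and endpoints. Applying the invariance $\int_\gamma g=\int_{\gamma\plp}g$ (noted in the text) to each such subpath transfers the identity from $\gamma$ to $\gamma\plp$, giving $\gamma\plp\in\mgen\F$. Since $\gamma\pln$ is obtained from $\gamma\plp$ by the linear reparametrisation $t\mapsto t\pl\gamma$, its subpaths correspond to those of $\gamma\plp$ in the same way, and the identical argument yields $\gamma\pln\in\mgen\F$. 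I expect the only genuine obstacle to be this subpath correspondence in (b): one must use admissibility to guarantee that the variation function is strictly monotone, for otherwise $\gamma\plp$ would collapse constant subpaths and the bijection between subpaths could fail.
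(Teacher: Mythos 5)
Your proof is correct and follows essentially the same route as the paper, which simply declares (a) and (b) clear from the definitions and the reparametrisation-invariance discussion of Section 1, and cites Bland--Feinstein for the join argument in (c). Your write-up supplies exactly the details being alluded to there (sign change and endpoint swap under reversal, the strictly increasing variation function making subpaths of $\gamma\plp$ correspond to subpaths of $\gamma$, and the telescoping at the join point), so no further comment is needed.
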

\begin{proof}
(a) This is clear from the definitions.

(b) This is clear from the definitions, and the discussions in Section \ref{paths section}.

(c) This is effectively \cite[Theorem~4.5]{bland2005}, and follows from the definitions.
\end{proof}

We also make the following observation about collections of paths generated by a set in $C(X)\times C(X)$ and its closure in the norm given by $\norm{(f,g)}_1=\abs f_X+\abs g_X$ for each $(f,g)\in C(X)\times C(X)$.

\begin{lemma}\label{path generated dense set}
Let $S\subseteq C(X)\times C(X)$. Then $\pgen{\overline S}=\pgen{S}$ where the closure of $S$ is taken in the norm $\emptynorm_1$ on $C(X)\times C(X)$ as above.
\end{lemma}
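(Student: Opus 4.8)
The plan is to prove the two inclusions separately, of which one is immediate and the other rests on a routine approximation argument. Since $S\subseteq\overline S$, the monotonicity statement in Lemma~\ref{maximal collection same algebra} gives $\pgen{\overline S}\subseteq\pgen S$ at once, so the real content is the reverse inclusion $\pgen S\subseteq\pgen{\overline S}$.

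To establish this, I would fix a path $\gamma\in\pgen S$ and an arbitrary pair $(f,g)\in\overline S$, and show that $g$ is the $\gamma$-derivative of $f$; that is, $\int_\sigma g(z)\,\dm z=f(\sigma\pfi)-f(\sigma\pin)$ for every subpath $\sigma$ of $\gamma$. Choose a sequence $(f_n,g_n)\in S$ with $\norm{(f_n,g_n)-(f,g)}_1\to0$, so that $f_n\to f$ and $g_n\to g$ uniformly on $X$. Because $\gamma\in\pgen S$, for each subpath $\sigma$ and each $n$ we have the identity $\int_\sigma g_n(z)\,\dm z=f_n(\sigma\pfi)-f_n(\sigma\pin)$, and the proof is completed by letting $n\to\infty$ on both sides.

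The only point requiring care is the passage to the limit inside the contour integral. Since $\gamma$ is admissible, every subpath $\sigma$ is rectifiable with $\pl\sigma\leq\pl\gamma<\infty$, so the standard length estimate gives $\abs{\int_\sigma(g_n-g)(z)\,\dm z}\leq\abs{g_n-g}_{\sigma\adj}\,\pl\sigma\leq\abs{g_n-g}_X\,\pl\gamma\to0$, whence $\int_\sigma g_n(z)\,\dm z\to\int_\sigma g(z)\,\dm z$. On the right-hand side, uniform (indeed pointwise) convergence yields $f_n(\sigma\pfi)\to f(\sigma\pfi)$ and $f_n(\sigma\pin)\to f(\sigma\pin)$. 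Taking limits in the displayed identity therefore gives the required equation for $(f,g)$, and since $\sigma$ and $(f,g)$ were arbitrary we conclude $\gamma\in\pgen{\overline S}$.

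I do not expect a genuine obstacle here: the argument is the familiar observation that the $\gamma$-derivative relation is defined by a family of functionals (evaluation at the endpoints and integration against $g$) that are continuous with respect to $\emptynorm_1$, so the relation survives under $\emptynorm_1$-limits. The one thing to keep in view is that the length bound $\pl\sigma\leq\pl\gamma$ makes the convergence of the integrals uniform over all subpaths $\sigma$ of $\gamma$, which is precisely what the definition of $\gamma$-derivative requires.
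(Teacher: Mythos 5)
Your proof is correct and follows essentially the same route as the paper's: the easy inclusion via the monotonicity of $\pgen{\cdot}$, then a uniform-convergence passage to the limit in the defining integral identity. If anything, you are slightly more careful than the paper, which displays the limiting identity only for $\gamma$ itself rather than for every subpath $\sigma$ of $\gamma$ as the definition of the $\gamma$-derivative requires; your explicit treatment of subpaths, with the bound $\pl\sigma\leq\pl\gamma$, closes that small gap.
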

\begin{proof}
By Lemma \ref{maximal collection same algebra}, we have $\pgen{\overline S}\subseteq\pgen S$. Let $(f_n,g_n)$ be a sequence in $S$ such that $(f_n,g_n)\to (f,g)\in \overline S$ as $n\to\infty$. Let $\gamma\in\pgen S$. Then we have
\[
f_n(\gamma\pfi)-f_n(\gamma\pin)=\int_\gamma g_n(z)\;\dm z,
\]
for all $n\in\N$. But now $f_n\to f$ and $g_n\to g$ uniformly as $n\to\infty$, so
\[
f(\gamma\pfi)-f(\gamma\pin)=\lim_{n\to\infty}f_n(\gamma\pfi)-f_n(\gamma\pin) =\lim_{n\to\infty}\int_\gamma g_n(z)\;\dm z=\int_{\gamma}g(z)\;\dm z.
\]
Thus $\gamma\in\pgen{\overline S}$. It follows that $\pgen{S}\subseteq\pgen{\overline S}$ and this completes the proof.
\end{proof}

We also require the following elementary lemma, which is a minor variant of a standard result. We include a proof for the convenience of the reader.

\begin{lemma}\label{composition rectifiable}
Let $\F$ be an effective collection of paths in $X$. Let $\varphi\in\fd1X$ and let $\gamma:[a,b]\to X\in\F$. Then $\varphi\circ\gamma$ is rectifiable$,$ and hence$,$ if $\varphi\circ\gamma$ is non-constant$,$ $(\varphi\circ\gamma)\plp$ is admissible.
\end{lemma}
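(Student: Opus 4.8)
The plan is to show that $\varphi\circ\gamma$ has finite total variation by relating its variation to the length of $\gamma$ via the $\F$-derivative of $\varphi$. Since $\varphi\in\fd1X$, it has an $\F$-derivative $\varphi\fdiff1\in C(X)$, and by Proposition \ref{F derive old properties}(b) and the definition of $\F$-derivatives, for any subpath $\sigma$ of $\gamma$ we have $\varphi(\sigma\pfi)-\varphi(\sigma\pin)=\int_\sigma\varphi\fdiff1(z)\;\dm z$. The key observation is that $\gamma\adj\subseteq X$ is compact, so $\varphi\fdiff1$ is bounded on $\gamma\adj$; set $M:=\abs{\varphi\fdiff1}_{\gamma\adj}\leq\abs{\varphi\fdiff1}_X$.

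First I would fix an arbitrary partition $\mathcal P=\{x_0,\dotsc,x_n\}$ of $[a,b]$ and estimate the associated variation sum for $\varphi\circ\gamma$. For each $j$, the restriction $\gamma|_{[x_j,x_{j+1}]}$ is a subpath $\sigma_j$ of $\gamma$, so $\abs{(\varphi\circ\gamma)(x_{j+1})-(\varphi\circ\gamma)(x_j)}=\abs{\int_{\sigma_j}\varphi\fdiff1(z)\;\dm z}\leq M\cdot\pl{\sigma_j}=M\cdot V_{x_j}^{x_{j+1}}(\gamma)$, using the standard estimate that the modulus of a contour integral is bounded by the sup-norm of the integrand times the path length. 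Summing over $j$ and using additivity of total variation over adjacent intervals gives $\sum_{j=0}^{n-1}\abs{(\varphi\circ\gamma)(x_{j+1})-(\varphi\circ\gamma)(x_j)}\leq M\sum_{j=0}^{n-1}V_{x_j}^{x_{j+1}}(\gamma)=M\cdot V_a^b(\gamma)=M\cdot\pl\gamma$.

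Taking the supremum over all partitions $\mathcal P$ of $[a,b]$ yields $V_a^b(\varphi\circ\gamma)\leq M\cdot\pl\gamma<\infty$, since $\gamma$ is admissible and hence rectifiable. Thus $\varphi\circ\gamma$ is rectifiable. The final clause is then immediate: if $\varphi\circ\gamma$ is non-constant, then it is a non-constant rectifiable path, so by the discussion in Section \ref{paths section} its path length parametrisation $(\varphi\circ\gamma)\plp$ is admissible.

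The only mild subtlety — really the sole point requiring the $\F$-differentiability hypothesis rather than mere continuity of $\varphi$ — is the step identifying the increment $\varphi(\sigma_j\pfi)-\varphi(\sigma_j\pin)$ with the contour integral $\int_{\sigma_j}\varphi\fdiff1$. This relies on each $\sigma_j$ being an admissible subpath on which the defining relation for the $\F$-derivative holds; since $\gamma\in\F$ and $\F$ is effective, every subpath of $\gamma$ lies in $\F$, so the relation applies to each $\sigma_j$. Everything else is a routine application of the definition of total variation together with the basic contour-integral length estimate, so there is no genuine obstacle.
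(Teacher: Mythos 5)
Your proof is correct and follows essentially the same route as the paper's: express each increment $(\varphi\circ\gamma)(x_{j+1})-(\varphi\circ\gamma)(x_j)$ as $\int_{\gamma_j}\varphi\fdiff 1(z)\;\dm z$ over the corresponding subpath, apply the standard length estimate, and sum using additivity of variation to get $V_a^b(\varphi\circ\gamma)\leq\abs{\varphi\fdiff 1}_X\pl\gamma$. The only cosmetic difference is your citation of Proposition \ref{F derive old properties}(b), which is not needed (the identity for subpaths is just the definition of the $\F$-derivative together with effectiveness of $\F$, as you correctly note in your final paragraph).
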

\begin{proof}
Let $\mathcal P=\{x_0,\dotsc,x_n\}$ be a partition of $[a,b]$. Set $\sigma:=\varphi\circ\gamma$ and let $\gamma_j$ be the subpath of $\gamma$ obtained by restricting $\gamma$ to $[x_j,x_{j+1}]$ for each $j\in\{0,\dotsc,n-1\}$. We have
\[
V(\sigma,\mathcal P):=\sum_{j=0}^{n-1}\abs{\sigma(x_{j+1})-\sigma(x_j)} =\sum_{j=0}^{n-1}\left\lvert\int_{\gamma_j}\varphi\fdiff 1(z)\;\dm z\right\rvert\leq \abs{\varphi\fdiff 1}_X\pl\gamma.
\]
It follows that $\pl\sigma=\sup V(\sigma,\mathcal P)\leq \abs{\varphi\fdiff 1}_X\pl\gamma$, where the supremum is taken over all partitions $\mathcal P$ of $[a,b]$. As noted earlier, if $\sigma$ is non-constant then $\sigma\plp$ is admissible. This completes the proof.
\end{proof}

We now introduce our notion of compatibility.

\begin{definition}\label{compatibility definition}
Let $Y$ be a semi-rectifiable compact plane set$,$ let $\F$ be an effective collection of paths in $X$ and let $\G$ be an effective collection of paths in $Y$. Let $\varphi\in\fd1X$ such that $\varphi(X)\subseteq Y$. We say that $\varphi$ is {\em $\F$-$\G$-compatible} if$,$ for each $\gamma\in\F$, either $\varphi\circ\gamma$ is constant or we have $(\varphi\circ\gamma)\plp\in\mgen{\G}$.
\end{definition}

Let $X,Y$ be semi-rectifiable compact plane sets, let $\F$ be an effective collection of paths in $X$, and $\G$ be an effective collection of paths in $Y$. If $\mgen\G$ is the collection of all admissible paths in $Y$ then, for any $\varphi\in\fd 1X$ with $\varphi(X)\subseteq Y,$ $\varphi$ is $\F$-$\G$-compatible.

\begin{example}\label{incompatibility example}
Let $X:=\{x+\ri y\in\C:x,y\in[0,1]\}$. Let $\F$ be the collection of all line segment paths in $X$ parallel to the real axis and let $\G$ be the collection of all line segment paths in $X$ parallel to the imaginary axis. Set $\varphi(z):=z$ ($z\in X$). Then $\varphi:X\to X$ is continuously differentiable on $X$ and $\varphi(X)=X$. Clearly $\varphi\circ\gamma\in\F$ for all $\gamma\in\F$. It is not hard to show that, if $\gamma\in\mgen\F,$ then $\gamma\adj\in\F\adj$ and, if $\gamma\in\mgen\G,$ then $\gamma\adj\in\G\adj$. Thus $\mgen\F\cap\mgen\G=\emptyset$ and it follows that $\varphi$ is not $\F$-$\G$-compatible.
\end{example}

Let $X,Y$ be semi-rectifiable compact plane sets, let $\F$ be an effective collection of paths in $X$, and let $\G$ be an effective collection of paths in $Y$. If $\od Y$ is dense in $\fd[\G]1Y$ then, by Lemma \ref{path generated dense set}, the collection $\mgen\G$ is the collection of all admissible paths in $Y$. So, by the comments following Definition \ref{compatibility definition}, any function $\varphi\in\fd1X$ such that $\varphi(X)\subseteq Y$ is automatically $\F$-$\G$-compatible.

\section{Composition of $\F$-differentiable functions}
We now discuss an analogue of the chain rule for $\F$-differentiable functions. We require the following lemma, which is an $\F$-differentiability version of the usual change of variable formula.

\begin{lemma}\label{path composition integral}
Let $X$ be a semi-rectifiable$,$ compact plane set and let $\F$ be an effective collection of paths in $X$. Let $\varphi\in\fd1X$ and let $\gamma:[a,b]\to X\in\F$. Then$,$ for each $f\in C(\varphi(\gamma\adj)),$ we have
\begin{equation}\label{path composition integral formula}
\int_\gamma(f\circ\varphi)\varphi\fdiff 1(z)\;\dm z=\int_{\varphi\circ\gamma}f(z)\;\dm z.
\end{equation}
\end{lemma}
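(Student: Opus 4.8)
The plan is to realise both sides of \eqref{path composition integral formula} as limits of one and the same family of Riemann--Stieltjes sums. Since $\varphi\circ\gamma$ is rectifiable by Lemma~\ref{composition rectifiable}, the contour integral on the right-hand side is, by definition (see \cite[Chapter~6]{apostol1974}), the Riemann--Stieltjes integral $\int_a^b f((\varphi\circ\gamma)(t))\,\dm(\varphi\circ\gamma)(t)$. As the integrand $f\circ\varphi\circ\gamma$ is continuous and the integrator $\varphi\circ\gamma$ has bounded variation, this integral is the limit, as the mesh of the partition tends to $0$, of the tagged sums
\[
\Sigma(\mathcal P)=\sum_{j=0}^{n-1}f\bigl(\varphi(\gamma(t_j^*))\bigr)\bigl(\varphi(\gamma(x_{j+1}))-\varphi(\gamma(x_j))\bigr),
\]
where $\mathcal P=\{x_0,\dots,x_n\}$ is a partition of $[a,b]$ and $t_j^*\in[x_j,x_{j+1}]$.

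The key step is to rewrite each increment of $\varphi\circ\gamma$ by means of the $\F$-derivative. Writing $\gamma_j$ for the restriction of $\gamma$ to $[x_j,x_{j+1}]$, effectiveness of $\F$ gives $\gamma_j\in\F$, so the defining property of the $\F$-derivative $\varphi\fdiff 1$ yields $\varphi(\gamma(x_{j+1}))-\varphi(\gamma(x_j))=\int_{\gamma_j}\varphi\fdiff 1(z)\,\dm z$, and hence $\Sigma(\mathcal P)=\sum_{j=0}^{n-1}f(\varphi(\gamma(t_j^*)))\int_{\gamma_j}\varphi\fdiff 1(z)\,\dm z$. On the other hand, by additivity of the contour integral over the subdivision, the left-hand side of \eqref{path composition integral formula} equals $\sum_{j=0}^{n-1}\int_{\gamma_j}(f\circ\varphi)\varphi\fdiff 1(z)\,\dm z$. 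I would then bound the difference of these two expressions termwise by
\[
\left\lvert\int_{\gamma_j}\bigl[(f\circ\varphi)(z)-f(\varphi(\gamma(t_j^*)))\bigr]\varphi\fdiff 1(z)\,\dm z\right\rvert\le\omega_j\,\abs{\varphi\fdiff 1}_X\,\pl{\gamma_j},
\]
where $\omega_j$ denotes the oscillation of the continuous function $f\circ\varphi\circ\gamma$ on $[x_j,x_{j+1}]$.

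Summing and using $\sum_j\pl{\gamma_j}=\pl\gamma$ bounds the total difference by $(\max_j\omega_j)\,\abs{\varphi\fdiff 1}_X\,\pl\gamma$. Since $f\circ\varphi\circ\gamma$ is uniformly continuous on the compact interval $[a,b]$, the quantity $\max_j\omega_j$ tends to $0$ as the mesh of $\mathcal P$ tends to $0$. Consequently $\Sigma(\mathcal P)$ converges to the left-hand side of \eqref{path composition integral formula} as well, and comparing the two limits of $\Sigma(\mathcal P)$ gives the desired equality.

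I expect the main obstacle to be the bookkeeping in this error estimate: one must keep the oscillation bound uniform in $j$ and invoke uniform continuity of the composite $f\circ\varphi\circ\gamma$ on $[a,b]$ (rather than of $f$ alone, which would require separate control of $\varphi\circ\gamma$), so that the error genuinely vanishes as the mesh shrinks. A secondary point to check is that the sums $\Sigma(\mathcal P)$ really do compute $\int_{\varphi\circ\gamma}f$ when $\varphi\circ\gamma$ is merely rectifiable, which is precisely where Lemma~\ref{composition rectifiable} enters; the case in which $\varphi\circ\gamma$ is constant is then covered automatically, since every increment, and hence each $\Sigma(\mathcal P)$, vanishes.
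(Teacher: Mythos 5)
Your proposal is correct and follows essentially the same route as the paper's own proof: both sides are compared through the Riemann--Stieltjes sums for $\int_{\varphi\circ\gamma}f$, each increment $\varphi(\gamma(x_{j+1}))-\varphi(\gamma(x_j))$ is converted to $\int_{\gamma_j}\varphi\fdiff 1(z)\,\dm z$ via the defining property of the $\F$-derivative on subpaths, and the remaining error is controlled by the uniform continuity of $f\circ\varphi\circ\gamma$ on $[a,b]$ together with the bound $\abs{\varphi\fdiff 1}_X\pl\gamma$. Your explicit remarks about needing uniform continuity of the full composite and about why the tagged sums compute the integral over the merely rectifiable path $\varphi\circ\gamma$ match the roles of the paper's quantities $T(\mathcal P)$ and Lemma~\ref{composition rectifiable}, respectively.
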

\begin{proof}
By Lemma \ref{composition rectifiable}, $\sigma:=\varphi\circ\gamma$ is a rectifiable path so that the integral on the right hand side of \eqref{path composition integral formula} exists. Fix $f\in C(\sigma\adj)$ and let $\varepsilon>0$. Set $M:=\abs{\varphi\fdiff 1}_X\pl\gamma$ and let $h:=f\circ\varphi:[a,b]\to\C$. Since $h$ is uniformly continuous, there exists $\delta>0$ such that, for each $s,t\in[a,b]$ with $\abs{s-t}<\delta$, we have $\abs{h(t)-h(s)}<\varepsilon/(2M)$. Choose a partition $\mathcal P_0=\{t_0\diff 0,\dotsc,t_m\diff 0\}$ of $[a,b]$ such that $\max_{0\leq j\leq n-1}\abs{t_{j+1}-t_j}<\delta$. For any partition $\mathcal P=\{t_0,\dotsc,t_n\}$ finer than $\mathcal P_0$ and, for each $j\in\{0,\dotsc,n-1\}$, let $\gamma_j^{(\mathcal P)}$ be the restriction of $\gamma$ to $[t_j,t_{j+1}]$. We have
\begin{equation}\label{path composition integral equation 1}
T(\mathcal P):=\left\lvert\sum_{j=0}^{n-1}\int_{\gamma_j^{(\mathcal P)}}(f(\varphi(z))-h(s_j))\varphi\fdiff 1(z)\;\dm z\right\rvert<\frac\varepsilon{2M}\abs{\varphi\fdiff 1}_X\pl\gamma=\frac\varepsilon2,
\end{equation}
for any $s_j\in[t_j,t_{j+1}]$ ($j=0,1,\dotsc,n-1$).

Now fix a partition $\mathcal P=\{t_0,\dotsc,t_n\}$ of $[a,b]$ finer than $\mathcal P_0$ such that, viewing the integral in the RHS of \eqref{path composition integral formula} as a Riemann-Stieltjes integral on $[a,b]$, we have
\begin{equation}\label{path composition integral equation 2}
\left\lvert\sum_{j=0}^{n-1}h(s_j)(\sigma(t_{j+1})-\sigma(t_j))-\int_\sigma f(z)\;\dm z\right\rvert<\frac\varepsilon2,
\end{equation}
for any choice of $s_j\in[t_j,t_{j+1}]$ ($j=0,1,\dotsc,n-1$).

We now {\em claim} that, for this partition $\mathcal P$, we have
\begin{equation}\label{path composition integral equation 3}
\left\lvert\int_{\gamma}f(\varphi(z))\varphi\fdiff 1(z)\;\dm z-\sum_{j=0}^{n-1}h(s_j)(\sigma(t_{j+1})-\sigma(t_j))\right\rvert<\frac\varepsilon2
\end{equation}
for any choice of $s_j\in[t_j,t_{j+1}]$ ($j=0,1,\dotsc,n-1$).

For the remainder of the proof, for each $j\in\{0,\dotsc,n-1\}$, fix $s_j\in[t_j,t_{j+1}]$ and let $S:=\sum_{j=0}^{n-1}h(s_j)(\sigma(t_{j+1})-\sigma(t_j))$.

By the definition of $\varphi\fdiff 1$, we have
\[
\sum_{j=0}^{m-1}h(s_j)\int_{\gamma_j^{(\mathcal P)}}\varphi\fdiff 1(z)\;\dm z=S.
\]
We also have
\[
\left\lvert\int_\gamma f(\varphi(z))\varphi\fdiff 1(z)\;\dm z-\sum_{j=0}^{n-1}h(s_j)\int_{\gamma_j^{(\mathcal P)}}\varphi\fdiff 1(z)\;\dm z\right\rvert
=T(\mathcal P),
\]
where $T(\mathcal P)<\varepsilon/2$ as in \eqref{path composition integral equation 1}.

But now, by \eqref{path composition integral equation 1} and \eqref{path composition integral equation 2}, we have
\[
\left\lvert\int_\gamma f(\varphi(z))\varphi\fdiff 1(z)\;\dm z-\int_\sigma f(w)\;\dm w\right\rvert<\frac\varepsilon2+\frac\varepsilon2=\varepsilon.
\]
This holds for all $\varepsilon>0$ and any choice of the $s_j$, so the result follows.
\end{proof}

We can now state and prove a version of the chain rule for $\F$-differentiable functions.

\begin{theorem}\label{new F chain rule}
Let $X,Y$ be semi-rectifiable$,$ compact plane sets$,$ let $\F$ be an effective collection of paths in $X,$ and let $\G$ be an effective collection of paths on $Y$. Let $\varphi\in\fd1X$ with $\varphi(X)\subseteq Y$. Suppose that $\varphi$ is $\F$-$\G$-compatible. Then$,$ for all $f\in \fd[\G]1Y,$ $f\circ \varphi$ is $\F$-differentiable and $(f\circ\varphi)\fdiff 1=(f\fdiff 1\circ\varphi)\varphi\fdiff 1$.
\end{theorem}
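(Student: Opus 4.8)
The plan is to verify directly that the function $g:=(f\fdiff1\circ\varphi)\varphi\fdiff1$ is an $\F$-derivative of $f\circ\varphi$; uniqueness of $\F$-derivatives (Proposition \ref{F derive old properties}(a)) will then give the stated formula. First I would check that $g\in C(X)$: since $f\fdiff1\in C(Y)$ and $\varphi(X)\subseteq Y$, the composition $f\fdiff1\circ\varphi$ lies in $C(X)$, and $\varphi\fdiff1\in C(X)$, so their product $g$ is continuous on $X$. It then remains only to show that $\int_\gamma g(z)\,\dm z=(f\circ\varphi)(\gamma\pfi)-(f\circ\varphi)(\gamma\pin)$ for every $\gamma\in\F$.

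So fix $\gamma\in\F$ and set $\sigma:=\varphi\circ\gamma$. Since $f\fdiff1\in C(Y)$ restricts to a continuous function on $\varphi(\gamma\adj)\subseteq Y$, I would apply the change-of-variable formula of Lemma \ref{path composition integral}, with $f\fdiff1$ playing the role of the function $f$ there, to obtain
\[
\int_\gamma g(z)\,\dm z=\int_\gamma (f\fdiff1\circ\varphi)(z)\,\varphi\fdiff1(z)\,\dm z=\int_{\sigma}f\fdiff1(w)\,\dm w.
\]
This reduces the problem to evaluating $\int_\sigma f\fdiff1(w)\,\dm w$, and it is precisely here that the compatibility hypothesis does its work.

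Next I would split into two cases according to Definition \ref{compatibility definition}. If $\sigma=\varphi\circ\gamma$ is constant, then $\varphi(\gamma\pin)=\varphi(\gamma\pfi)$, so the right-hand side of the desired identity vanishes, while $\int_\sigma f\fdiff1(w)\,\dm w=0$ as well, and the identity holds trivially. Otherwise, $\F$-$\G$-compatibility gives $\sigma\plp\in\mgen\G=\pgen{S_\G}$. Because $(f,f\fdiff1)\in S_\G$, the definition of $\pgen{S_\G}$ says exactly that $f\fdiff1$ is the $\sigma\plp$-derivative of $f$; taking the subpath to be $\sigma\plp$ itself yields $\int_{\sigma\plp}f\fdiff1(w)\,\dm w=f((\sigma\plp)\pfi)-f((\sigma\plp)\pin)$. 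Finally, using that contour integrals are invariant under the path-length reparametrisation (as recorded in Section \ref{paths section}) and that $\sigma\plp$ shares the endpoints of $\sigma$, I would conclude $\int_\sigma f\fdiff1(w)\,\dm w=f(\varphi(\gamma\pfi))-f(\varphi(\gamma\pin))=(f\circ\varphi)(\gamma\pfi)-(f\circ\varphi)(\gamma\pin)$, as required.

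I expect the only genuine subtlety to be the bookkeeping in the non-constant case: recognising that membership of $\sigma\plp$ in $\mgen\G$ is precisely the assertion that $f\fdiff1$ integrates to the increment of $f$ along $\sigma\plp$, and then transporting this statement back from $\sigma\plp$ to $\sigma$ via reparametrisation invariance. All the analytic content sits in Lemma \ref{path composition integral}; the rest of the argument is an unwinding of the definitions of $\mgen\G$ and of $\F$-$\G$-compatibility, so I do not anticipate any further obstacle.
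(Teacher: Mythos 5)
Your proof is correct and follows essentially the same route as the paper: apply Lemma \ref{path composition integral} to convert $\int_\gamma(f\fdiff1\circ\varphi)\varphi\fdiff1$ into $\int_{\varphi\circ\gamma}f\fdiff1$, then use $\F$-$\G$-compatibility to evaluate the latter as the increment of $f$, and finish by uniqueness of $\F$-derivatives. In fact you are slightly more careful than the published argument, which writes ``$\varphi\circ\gamma\in\mgen\G$'' and silently skips both the constant case and the passage between $\varphi\circ\gamma$ and $(\varphi\circ\gamma)\plp$ via reparametrisation invariance; your explicit handling of these points is a small but genuine improvement in rigour.
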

\begin{proof}
Fix $f\in\fd[\G]1Y$ and $\gamma\in\F$. Then, by Lemma \ref{path composition integral}, we have
\[
\int_{\gamma}(f\fdiff 1\circ\varphi)(z)\varphi\fdiff 1(z)\;\dm z=\int_{\varphi\circ\gamma}f\fdiff 1(z)\;\dm z.
\]
Since $\varphi$ is $\F$-$\G$-compatible, we have $\varphi\circ\gamma\in\mgen{\G}$ and so
\[
\int_{\varphi\circ\gamma}f\fdiff 1(z)\;\dm z=f((\varphi\circ\gamma)\pfi)-f((\varphi\circ\gamma)\pin).
\]
But $(\varphi\circ\gamma)\pfi=\varphi(\gamma\pfi)$ and $(\varphi\circ\gamma)\pin=\varphi(\gamma\pin)$. Thus $f\circ\varphi$ is $\F$-differentiable and has $\F$-derivative $(f\fdiff 1\circ\varphi)\varphi\fdiff 1$. This completes the proof.
\end{proof}

As a corollary we obtain the quotient rule for $\F$-differentiable functions. This was originally proved by means of repeated bisection in \cite{chaobankoh2012}.

\begin{corollary}\label{F deriv quotient rule}
Let $X$ be a semi-rectifiable compact plane set$,$ let $\F$ be an effective collection of paths in $X,$ and let $f,g\in\fd 1X$ such that $0\notin g(X)$. Then we have $f/g\in\fd 1X$ and $(f/g)\fdiff 1=(gf\fdiff 1-fg\fdiff 1)/g^2$.
\end{corollary}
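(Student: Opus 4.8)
The plan is to deduce the quotient rule from the chain rule (Theorem~\ref{new F chain rule}) together with the product rule for $\F$-derivatives (Proposition~\ref{F derive old properties}(c)). Since $f/g=f\cdot(1/g)$, it suffices to establish the reciprocal rule, namely that $1/g\in\fd1X$ with $(1/g)\fdiff1=-g\fdiff1/g^2$; the product rule then gives $f/g\in\fd1X$ with $\F$-derivative $f\cdot(-g\fdiff1/g^2)+f\fdiff1\cdot(1/g)=(gf\fdiff1-fg\fdiff1)/g^2$, which is exactly the claimed formula. To obtain the reciprocal rule I would apply the chain rule to the composition of $g$ with the inversion map $h\colon w\mapsto 1/w$.

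First I would construct a suitable target set. Since $X$ is compact and $0\notin g(X)$, the quantity $r:=\min_{x\in X}\abs{g(x)}$ is strictly positive and $R:=\max_{x\in X}\abs{g(x)}$ is finite, so $g(X)$ is contained in the closed annulus $Y:=\{w\in\C:r/2\leq\abs w\leq R+1\}$. This $Y$ is a perfect, uniformly regular (hence semi-rectifiable) compact plane set with $g(X)\subseteq Y$ and $0\notin Y$. I would then take $\G$ to be the collection of \emph{all} admissible paths in $Y$. This $\G$ is effective: subpaths of admissible paths are admissible, and since $Y$ is semi-rectifiable the images of its rectifiable Jordan paths (which, being injective, are admissible) are dense in $Y$. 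On such a $Y$ the inversion map $h$ is holomorphic on a neighbourhood of $Y$, so $h\in\od Y\subseteq\fd[\G]1Y$, and by Proposition~\ref{F derive old properties}(b) its $\F$-derivative is the classical derivative $h\fdiff1\colon w\mapsto-1/w^2$.

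The step I expect to be the crux is verifying the $\F$-$\G$-compatibility of $g$ demanded by the chain rule, since Example~\ref{incompatibility example} shows that compatibility can genuinely fail. I would sidestep this difficulty by exploiting the choice of $\G$: for $\G$ equal to all admissible paths in $Y$, one checks directly that $\mgen\G$ is precisely the collection of all admissible paths in $Y$. Indeed, for any admissible $\gamma$ in $Y$ and any $(f,f\fdiff1)\in S_\G$, every subpath $\sigma$ of $\gamma$ is admissible and hence lies in $\G$, so $\int_\sigma f\fdiff1(z)\,\dm z=f(\sigma\pfi)-f(\sigma\pin)$; thus $f\fdiff1$ is the $\gamma$-derivative of $f$ and $\gamma\in\mgen\G$, while the reverse inclusion $\mgen\G\subseteq\mathcal A$ is immediate. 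By the remark following Definition~\ref{compatibility definition}, combined with Lemma~\ref{composition rectifiable} (which ensures $(g\circ\gamma)\plp$ is admissible whenever $g\circ\gamma$ is non-constant), every $g\in\fd1X$ with $g(X)\subseteq Y$ is automatically $\F$-$\G$-compatible. The chain rule then yields $1/g=h\circ g\in\fd1X$ with $(1/g)\fdiff1=(h\fdiff1\circ g)\,g\fdiff1=-g\fdiff1/g^2$, and feeding this into the product rule as in the first paragraph completes the argument.
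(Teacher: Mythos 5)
Your proposal is correct and follows essentially the same route as the paper: establish the reciprocal rule by applying Theorem~\ref{new F chain rule} to the inversion map with $\G$ taken to be all admissible paths in the target (so that compatibility is automatic), then finish with the product rule, Proposition~\ref{F derive old properties}(c). The only divergence is that the paper takes the target set to be $g(X)$ itself rather than an enclosing annulus; your choice is a harmless (and arguably more careful) variant, since it makes the hypotheses on the target set --- perfect and semi-rectifiable --- immediate to verify.
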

\begin{proof}
We first show that $h:=1/g\in\fd 1X$ and that $h\fdiff 1=-g\fdiff 1/g^2$. Since we have $0\notin g(X)$, the function $\varphi(z):=1/z$ ($z\in g(X)$) is continuous and complex-differentiable on $g(X)$, i.e. $\varphi\in\od[1]{g(X)}$. Let $\G$ be the collection of all admissible paths in $g(X)$. Then we have $\varphi\in\od[1]{g(X)}\subseteq\fd[\G]1{g(X)}$, and $g$ is $\F$-$\G$-compatible by the comments following Definition \ref{compatibility definition}. By Theorem \ref{new F chain rule}, $\varphi\circ g\in\fd 1X$ with $(\varphi\circ g)\fdiff 1=(\varphi\fdiff 1\circ g)g\fdiff 1$. However, $\varphi\fdiff 1$ is just the ordinary complex derivative of $\varphi$, and so $\varphi\fdiff 1\circ g=-1/g^2$. Thus $h\fdiff 1=-g\fdiff 1/g^2$. The result now follows from the product rule for $\F$-derivatives, Proposition \ref{F derive old properties}(c).
\end{proof}

By combining Theorem \ref{new F chain rule} with our comments at the end of Section \ref{maximal compatibility section}, we obtain the following corollary.

\begin{corollary}\label{old F chain rule}
Let $X$ be a semi-rectifiable$,$ compact plane set and let $\F$ be an effective collection of paths in $X$. Let $f,g\in\fd 1X$ such that $g(X)\subseteq X$. Suppose that $\od X$ is dense in $\fd 1X$. Then $f\circ g\in\fd1X$ and $(f\fdiff 1\circ g)g\fdiff 1$ is the $\F$-derivative of $f\circ g$.
\end{corollary}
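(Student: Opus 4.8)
The plan is to obtain this as the special case $Y=X$, $\G=\F$, $\varphi=g$ of Theorem~\ref{new F chain rule}. Under this identification $\fd[\G]1Y=\fd1X$, so $f$ is the outer function, $g\in\fd1X$ with $g(X)\subseteq X$ plays the role of $\varphi$, and both the conclusion that $f\circ g\in\fd1X$ and the formula $(f\circ g)\fdiff1=(f\fdiff1\circ g)g\fdiff1$ are exactly what the theorem delivers. The only hypothesis of Theorem~\ref{new F chain rule} that is not immediate is that $g$ be $\F$-$\F$-compatible, so all the work lies in extracting compatibility from the density assumption.

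First I would show that the density of $\od X$ in $\fd1X$ forces $\mgen\F$ to be the collection $\mathcal A$ of all admissible paths in $X$. The containment $\mgen\F\subseteq\mathcal A$ holds by definition. For the reverse, set $S_0:=\{(\varphi,\varphi'):\varphi\in\od X\}$; every $\varphi\in\od X$ has its ordinary complex derivative as $\F$-derivative (Proposition~\ref{F derive old properties}(b)), and the fundamental theorem of calculus for contour integrals gives $\int_\gamma\varphi'(z)\,\dm z=\varphi(\gamma\pfi)-\varphi(\gamma\pin)$ for \emph{every} admissible path $\gamma$, whence $\mathcal A\subseteq\pgen{S_0}$. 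Since $\od X$ is dense in $\fd1X$, the set $S_0$ is dense in $S_\F$ in the norm $\emptynorm_1$, so $\overline{S_0}=\overline{S_\F}$; applying Lemma~\ref{path generated dense set} to both sides yields $\mgen\F=\pgen{S_\F}=\pgen{S_0}=\mathcal A$. This is precisely the remark recorded at the end of Section~\ref{maximal compatibility section}, so in the write-up I would simply cite that remark rather than repeat the argument.

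With $\mgen\F=\mathcal A$ in hand, compatibility is free: by the comment following Definition~\ref{compatibility definition}, whenever $\mgen\G$ is the full collection of admissible paths, every $\varphi\in\fd1X$ with $\varphi(X)\subseteq Y$ is $\F$-$\G$-compatible. Taking $Y=X$ and $\G=\F$, we conclude that $g$ is $\F$-$\F$-compatible. Feeding this into Theorem~\ref{new F chain rule} gives that $f\circ g$ is $\F$-differentiable with the asserted $\F$-derivative, completing the proof.

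I do not expect a genuine obstacle: the statement is a clean specialization, and all the substantive content — the change-of-variables computation of Lemma~\ref{path composition integral} and the bookkeeping of maximal path collections — has already been absorbed into Theorem~\ref{new F chain rule} and the preparatory lemmas of Section~\ref{maximal compatibility section}. The one point requiring a little care is the direction of the density argument in the second step: density gives $S_\F\subseteq\overline{S_0}$, and it is Lemma~\ref{path generated dense set} (that passing to the closure does not change the generated collection of paths), not density alone, that converts this into the needed equality $\mgen\F=\mathcal A$.
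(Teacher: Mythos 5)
Your proposal is correct and is exactly the paper's argument: the paper derives this corollary by combining Theorem~\ref{new F chain rule} with the remark at the end of Section~\ref{maximal compatibility section} (density of $\od X$ in $\fd1X$ plus Lemma~\ref{path generated dense set} forces $\mgen\F$ to be all admissible paths, so $g$ is automatically $\F$-$\F$-compatible). Your expanded justification of that remark via $S_0=\{(\varphi,\varphi'):\varphi\in\od X\}$ is also sound.
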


This last result was proved in the first author's PhD thesis (\cite{chaobankoh2012}) using the quotient rule, under the apparently stronger condition that the set of rational functions with no poles on $X$ be dense in $\fd 1X$. See the final section of this paper for an open problem related to this.

\smallskip

By applying Theorem \ref{new F chain rule} inductively, we obtain the Fa\'a di Bruno formula for the composition of $n$-times $\F$-differentiable functions.

\begin{corollary}[Fa\'a di Bruno formula]\label{F Faa di Bruno}
Let $X,Y$ be semi-rectifiable$,$ compact plane sets$,$ let $\F$ be an effective collection of paths in $X,$ and let $\G$ be an effective collection of paths in $Y$. Let $n\in\N$, and let $\varphi\in\fd nX$ with $\varphi(X)\subseteq Y$. Suppose that $\varphi$ is $\F$-$\G$-compatible. Then$,$ for all $f\in\fd[\G] nY,$ $f\circ\varphi\in\fd nX$ and$,$ for each $k\in\{1,2,\dotsc,n\},$ we have
\[
(f\circ\varphi)\fdiff k=\sum_{i=0}^k(f\fdiff i\circ\varphi)\sum\frac{k!}{a_1!\dotsm a_k!}\prod_{j=1}^k\left(\frac{\varphi\fdiff j}{j!}\right)^{a_j},
\]
where the inner sum is over all $a_1,\dotsc,a_k\in\No$ such that 
\[
a_1+\dotsb+a_k=i\quad\text{and}\quad a_1+2a_2+\dotsb+ka_k=k.
\]
\end{corollary}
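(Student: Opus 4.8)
The plan is to argue by induction on $n$, using Theorem \ref{new F chain rule} both as the base case and as the engine of the inductive step. The essential observation is that, once the relevant composites are known to lie in the appropriate algebras $\fd kX$, the $\F$-derivative operator obeys exactly the same algebraic laws as the ordinary derivative: linearity (Proposition \ref{F derive old properties}(d)), the product rule (Proposition \ref{F derive old properties}(c)), and the chain rule $(f\fdiff i\circ\varphi)\fdiff 1=(f\fdiff{i+1}\circ\varphi)\varphi\fdiff 1$ (Theorem \ref{new F chain rule}). Since the classical Fa\`a di Bruno formula is a purely formal consequence of these three laws, the combinatorial content transfers verbatim, and the real work lies in establishing membership and in checking that the chain rule is legitimately applicable at each stage.

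First I would settle membership, namely that $f\circ\varphi\in\fd nX$, by induction on $n$. The case $n=1$ is Theorem \ref{new F chain rule}. For the inductive step, observe that $\varphi$ is $\F$-$\G$-compatible independently of the order of differentiability, that $f\fdiff 1\in\fd[\G]{n-1}Y$ whenever $f\in\fd[\G]nY$, and that $\varphi\in\fd nX\subseteq\fd{n-1}X$. Theorem \ref{new F chain rule} gives $(f\circ\varphi)\fdiff 1=(f\fdiff 1\circ\varphi)\varphi\fdiff 1$, and the inductive hypothesis applied to $f\fdiff 1$ yields $f\fdiff 1\circ\varphi\in\fd{n-1}X$. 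Since $\fd{n-1}X$ is an algebra and $\varphi\fdiff 1\in\fd{n-1}X$ by the definition of $\fd nX$, the product $(f\fdiff 1\circ\varphi)\varphi\fdiff 1$ lies in $\fd{n-1}X$; hence $(f\circ\varphi)\fdiff 1\in\fd{n-1}X$, which is precisely the assertion $f\circ\varphi\in\fd nX$.

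With membership in hand, the formula itself is proved by induction on $k$. The case $k=1$ is the chain rule, which agrees with the stated expression, since the constraints $a_1+\dotsb+a_k=i$ and $a_1+2a_2+\dotsb+ka_k=k$ force $a_1=1$, $i=1$, and all other $a_j=0$ when $k=1$. For the step from $k$ to $k+1$ (with $k+1\le n$), I would apply the $\F$-derivative to the formula for $(f\circ\varphi)\fdiff k$ term by term. Linearity reduces this to differentiating each summand $(f\fdiff i\circ\varphi)\prod_{j}(\varphi\fdiff j/j!)^{a_j}$; the product rule splits the derivative into a contribution from $f\fdiff i\circ\varphi$ and contributions from the monomial in the $\varphi\fdiff j$. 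The chain rule turns the first into $(f\fdiff{i+1}\circ\varphi)\varphi\fdiff 1$ times the monomial---here one uses $i\le k\le n-1$, so that $f\fdiff i$ is genuinely $\F$-differentiable and the chain rule applies---while differentiating the monomial raises exactly one index $\varphi\fdiff j\mapsto\varphi\fdiff{j+1}$. Collecting these contributions and relabelling the multi-indices reproduces the formula for $k+1$; this regrouping is the standard Bell-polynomial recursion and is identical to the classical computation.

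The main obstacle is therefore not the combinatorics, which is formal, but ensuring that every derivative operation invoked in the inductive step is licensed in this setting: that the relevant composites and products remain in the algebras on which the $\F$-product and $\F$-chain rules are valid, and that the index $i$ never exceeds $n$ when the chain rule is applied to $f\fdiff i$. The membership induction of the second paragraph is exactly what guarantees this, so once it is in place the remainder reduces to the classical bookkeeping.
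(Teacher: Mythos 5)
Your proposal is correct and is precisely the argument the paper intends: the paper offers no written proof beyond the remark that the corollary follows ``by applying Theorem \ref{new F chain rule} inductively,'' and your two-stage induction (membership in $\fd nX$ by induction on $n$, then the formula by induction on $k$ via linearity, the product rule, and the chain rule, with the standard Bell-polynomial regrouping) is the natural way to carry that out. Your attention to the points the paper leaves implicit --- that $\F$-$\G$-compatibility of $\varphi$ is independent of the order of differentiability, and that $i\le k\le n-1$ keeps every application of the chain rule legitimate --- fills the gaps correctly.
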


\section{Homomorphisms}
We now discuss some algebras of infinitely $\F$-differentiable functions, analogous to the algebras $\od[]{X,M}$ introduced by Dales and Davie in \cite{dales1973} (see also the introduction of the present paper). In particular, we describe some sufficient conditions under which a function can induce a homomorphism between these algebras. These conditions are similar to those discussed by Feinstein and Kamowitz in \cite{feinstein2000a}. We begin with some definitions from \cite{abtahi2007,dales1973} and \cite{feinstein2000a}.

\begin{definition}
Let $M=(M_n)_{n=0}^\infty$ be a sequence of positive real numbers. We say that $M$ is an {\em algebra sequence} if $M_0=1$ and$,$ for all $j,k\in\No,$ we have
\[
\binom{j+k}{j}\leq\frac{M_{j+k}}{M_jM_k}.
\]
We define $d(M):=\lim_{n\to\infty}(n!/M_n)^{1/n}$ and we say that $M$ is {\em non-analytic} if $d(M)=0$.
\end{definition}

Let $X$ be a perfect compact plane set and let $M=(M_n)_{n=0}^\infty$ be an algebra sequence. Then the set of all rational functions with no poles on $X$ is contained in $\od[]{X,M}$ if and only if $M$ is non-analytic.

We now discuss the algebras $\ddf{X,M}$ as introduced in \cite{bland2005}.

\begin{definition}
Let $X$ be a semi-rectifiable$,$ compact plane set and let $\F$ be an effective collection of paths in $X$. Let $M=(M_n)_{n=0}^\infty$ be an algebra sequence. We define the normed algebra
\[
\ddf{X,M}:=\left\{f\in\fd{\infty}{X}:\sum_{j=0}^\infty\frac{\abs{f\fdiff j}_X}{M_j}<\infty\right\}
\]
with pointwise operations and the norm
\[
\norm{f}:=\sum_{j=0}^\infty\frac{\abs{f\fdiff j}_X}{M_j}\qquad(f\in\ddf{X,M}).
\]
\end{definition}

The proof that the $\ddf{X,M}$ are indeed algebras is similar to the proof of Theorem~1.6 of \cite{dales1973}. In fact, since $\fd1X$ is complete with the conditions above, it follows that $\ddf{X,M}$ is a Banach function algebra; this is noted in \cite{bland2005}.

Unfortunately, it is not known in general whether the Banach function algebras $\fd{n}{X}$ and $\ddf{X,M}$ are natural on $X$, although some sufficient conditions are given in \cite{chaobankoh2012}. We note that a necessary condition for $\ddf{X,M}$ to be natural is that $M=(M_n)_{n=0}^\infty$ be a non-analytic algebra sequence.

\begin{definition}
Let $X$ be a semi-rectifiable$,$ compact plane set$,$ and let $\F$ be an effective collection of paths in $X$. Let $f\in\fd{\infty}{X}$. We say that $f$ is {\em $\F$-analytic} if
\[
\limsup_{k\to\infty}\left(\frac{\abs{\varphi\fdiff k}_X}{k!}\right)^{1/k}<\infty.
\]
\end{definition}

Note that a function $f\in\fd{\infty}{X}$ which is $\F$-analytic need not be analytic (in the sense of extending to be analytic on a neighbourhood of $X$). For example, let $X$ and $\F$ be as in Example \ref{incompatibility example}, let $M=(M_n)_{n=0}^\infty$ be an algebra sequence, and let $f\in\ddf{X,M}$ with $f\notin \od[]{X,M}$ such that $f$ is $\F$-analytic. Then $f$ is not analytic. For example, we may take take $f(z)={\rm Im}(z)$ here, so that $f\fdiff 1$ is identically $0$.

We now give the main result of this section. No detailed proof is required, since once the Fa\'a di Bruno formula is established the calculations are identical to those from \cite{kamowitz1998} and \cite{feinstein2004a} (see also \cite{feinstein2000a}). Note that we do not assume the naturality of the algebras here.

\begin{theorem}\label{main hom result}
Let $X, Y$ be semi-rectifiable$,$ compact plane sets and let $n\in\N$. Let $\varphi\in\fd{\infty}{X}$ such that $\varphi(X)\subseteq Y$. Suppose that $\varphi$ is $\F$-analytic. Let $\F$ be an effective collection of paths on $X$ and let $\G$ be an effective collection of paths on $Y$ such that $\varphi$ is $\F$-$\G$-compatible.  Let $M=(M_n)_{n=0}^\infty$ be a non-analytic algebra sequence.
\begin{enumerate}
  \item If $\abs{\varphi\fdiff1}_X<1$ then $\varphi$ induces a homomorphism from the algebra $\ddf[\G]{Y,M}$ into $\ddf{X,M}$.
  \item If the sequence $(n^2M_{n-1}/M_{n})$ is bounded and $\abs{\varphi\fdiff1}_X\leq 1$ then $\varphi$ induces a homomorphism from $\ddf[\G]{Y,M}$ into $\ddf{X,M}$.
\end{enumerate}
\end{theorem}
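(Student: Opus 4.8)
The plan is to reduce the statement to the combinatorial norm estimates of \cite{kamowitz1998,feinstein2004a} (see also \cite{feinstein2000a}), by systematically replacing ordinary derivatives with $\F$-derivatives and invoking the Fa\'a di Bruno formula of Corollary \ref{F Faa di Bruno}. First I would check that $C_\varphi\colon f\mapsto f\circ\varphi$ even makes sense at the level of $\fd\infty X$: since $\ddf[\G]{Y,M}\subseteq\fd[\G]\infty Y$ and $\varphi\in\fd\infty X$ is $\F$-$\G$-compatible with $\varphi(X)\subseteq Y$, Corollary \ref{F Faa di Bruno} applied for every $n\in\N$ shows that $f\circ\varphi\in\fd\infty X$ for each $f\in\ddf[\G]{Y,M}$, and supplies an explicit formula for every $\F$-derivative $(f\circ\varphi)\fdiff k$. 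Because composition is linear and satisfies $(fh)\circ\varphi=(f\circ\varphi)(h\circ\varphi)$, the map $C_\varphi$ is automatically an algebra homomorphism; hence the entire content of the theorem is to show that $C_\varphi$ carries $\ddf[\G]{Y,M}$ into $\ddf{X,M}$ (and, as the same estimate will yield, does so boundedly).

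Next I would pass to uniform norms. Writing
\[
P_{k,i}:=\sum\frac{k!}{a_1!\dotsm a_k!}\prod_{j=1}^k\left(\frac{\abs{\varphi\fdiff j}_X}{j!}\right)^{a_j}
\]
for the Fa\'a di Bruno coefficient, where the inner sum is over all $a_1,\dotsc,a_k\in\No$ with $a_1+\dotsb+a_k=i$ and $a_1+2a_2+\dotsb+ka_k=k$, Corollary \ref{F Faa di Bruno} together with the bound $\abs{f\fdiff i\circ\varphi}_X\leq\abs{f\fdiff i}_Y$ (valid since $\varphi(X)\subseteq Y$) gives
\[
\frac{\abs{(f\circ\varphi)\fdiff k}_X}{M_k}\leq\sum_{i=0}^k\frac{\abs{f\fdiff i}_Y}{M_i}\cdot\frac{M_i}{M_k}P_{k,i}.
\]
Summing over $k$ and interchanging the order of summation would then reduce everything to the single estimate
\[
K:=\sup_{i\in\No}\;M_i\sum_{k\geq i}\frac{P_{k,i}}{M_k}<\infty,
\]
since this immediately yields $\norm{C_\varphi f}\leq K\norm{f}$ for all $f\in\ddf[\G]{Y,M}$, establishing well-definedness and boundedness simultaneously.

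Proving $K<\infty$ is the heart of the matter, and it is here that the hypotheses must be used; the argument transcribes the one in \cite{kamowitz1998,feinstein2004a}. The diagonal term $k=i$ forces $a_1=i$ and contributes $P_{i,i}=\abs{\varphi\fdiff1}_X^{\,i}$, so that $M_i\sum_{k\geq i}P_{k,i}/M_k\geq\abs{\varphi\fdiff1}_X^{\,i}$; this already shows a bound on $\abs{\varphi\fdiff1}_X$ is unavoidable and motivates both cases. For the off-diagonal terms I would rewrite the algebra sequence condition $\binom{j+k}{j}\leq M_{j+k}/(M_jM_k)$ as submultiplicativity of the sequence $(k!/M_k)$, and combine this with the $\F$-analyticity of $\varphi$, which guarantees that $\sum_{j\geq1}\abs{\varphi\fdiff j}_X\,t^j/j!$ has positive radius of convergence. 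This analyticity plays in the $\F$-setting the role played by genuine analyticity of $\varphi$ in the classical arguments, and it also gives $\varphi\in\ddf{X,M}$, since non-analyticity of $M$ forces $(\abs{\varphi\fdiff k}_X/M_k)^{1/k}\to0$. Under hypothesis (a) the strict inequality $\abs{\varphi\fdiff1}_X<1$ supplies a geometric factor dominating the remaining partition sums, whereas under hypothesis (b), when $\abs{\varphi\fdiff1}_X=1$, the boundedness of $(n^2M_{n-1}/M_n)$ is exactly what absorbs the polynomial factors coming from the higher-order parts of the partitions.

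The main obstacle I anticipate is precisely this uniform-in-$i$ control of $M_i\sum_{k\geq i}P_{k,i}/M_k$. The delicacy lies in managing the Bell-type partition sums against the algebra sequence simultaneously for all orders $i$; the crude estimate obtained from submultiplicativity alone is too lossy, and the two hypotheses on $\abs{\varphi\fdiff1}_X$ (together with the growth condition on $M$ in case (b)) are what save it. Once the Fa\'a di Bruno formula of Corollary \ref{F Faa di Bruno} is in hand, however, the remaining manipulations are routine and identical to those of \cite{kamowitz1998,feinstein2004a}, the only genuinely new ingredient being the use of $\F$-analyticity in place of classical analyticity of $\varphi$.
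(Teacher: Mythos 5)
Your proposal is correct and follows essentially the same route as the paper, which gives no detailed proof but states that once the Fa\'a di Bruno formula (Corollary \ref{F Faa di Bruno}) is established, the calculations are identical to those of \cite{kamowitz1998} and \cite{feinstein2004a}. Your additional scaffolding (the reduction to the uniform bound $K<\infty$ and the role of $\F$-analyticity in place of classical analyticity) is a faithful transcription of those cited estimates rather than a different argument.
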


Note that, in the above, $\ddf{X,M}$ and $\ddf[\G]{Y,M}$ are always Banach function algebras, so we do not need to make any additional completeness assumptions.

\smallskip

When $(n^2M_{n-1}/M_n)$ is unbounded, the condition that $\abs{f\fdiff 1}_X\leq 1$ may no longer be sufficient for $\varphi$ to induce a homomorphism from $\ddf[\G]{Y,M}$ into $\ddf{X,M}$. The following example is from \cite{feinstein2004k}.

\begin{example}
Let $I=[0,1]$, let $\F$ be the collection of all admissible paths in $I$, and, for each $n\in\N$, let $M_n=(n!)^{3/2}$. Then $M=(M_n)_{n=0}^\infty$ is a non-analytic algebra sequence such that $n^2M_{n-1}/M_n\to\infty$ as $n\to\infty$. Moreover, since $I$ is uniformly regular, we have $\ddf{I,M}=\od{I,M}$, and $\ddf{I,M}$ is natural on $I$ by \cite[Theorem~4.4.16]{dales2000}. Let $\varphi(t):=(1+t^2)/2$ ($t\in I$).
Then $\varphi$ is an $\F$-analytic map from $I$ into $I$, $\abs{\varphi'}_X\leq 1$, and $\varphi$ is $\F$-$\F$-compatible. However, by \cite[Theorem~3.2]{feinstein2000a}, $\varphi$ does not induce a homomorphism from $\ddf{I,M}$ into $\ddf{I,M}$.
\end{example}

\section{Open problems}
We conclude with some open problems related to the content of this paper.
\begin{question}
Can the assumption that $\varphi$ be $\F$-analytic in Theorem $\ref{main hom result}$ be weakened or removed altogether$?$
\end{question}

We next ask two questions about maximal collections of paths.

\begin{question}
Let $X$ be a semi-rectifiable compact plane set, and let $\F$ be the collection of all Jordan paths in $X$. Is it necessarily true that $\mgen{\F}$ is the collection of all admissible paths in $X?$
\end{question}

\begin{question}
Let $\Gamma$ be an admissible path in $\C$, and set $X=\Gamma\adj$. Let $\F$ be the collection of all subpaths of $\Gamma$. Does $\mgen{\F}$ necessarily include all Jordan paths in $X?$
\end{question}

Let $X$ be a semi-rectifiable compact plane set.
Our final questions concern the density of rational functions and differentiable functions in the algebras $\od X$ and $\fd1X$, respectively. Some partial results were obtained in \cite{bland2005} and \cite{dalefein2010}.

\begin{question}
Is the set of all rational functions with no poles on $X$ always dense in $\od X?$
\end{question}

\begin{question}
Is $\od X$ always dense in $\fd1X$ when $\F$ is the collection of all admissible paths in $X?$
\end{question}


\providecommand{\bysame}{\leavevmode\hbox to3em{\hrulefill}\thinspace}
\providecommand{\MR}{\relax\ifhmode\unskip\space\fi MR }
\providecommand{\MRhref}[2]{%
  \href{http://www.ams.org/mathscinet-getitem?mr=#1}{#2}
}
\providecommand{\href}[2]{#2}

\end{document}